\theoremstyle{definition}
\newtheorem{theorem}{Theorem}
\newtheorem{lemma}[theorem]{Lemma}
\newtheorem{remark}[theorem]{Remark}
\newcommand{\ip}[2]{\langle #1,#2\rangle}
\newcommand{\abs}[1]{|#1|}
\numberwithin{equation}{section}
\renewcommand{\Pi}{\overline{\pi}}
\title[Uncertainty principles and differential operators]{Uncertainty principles and differential operators
  on the weighted {B}ergman space}
\author{Jens Gerlach Christensen}
\address{ Department of Mathematics, Colgate University} 
\email{jchristensen@colgate.edu}
\urladdr{http://www.math.colgate.edu/~jchristensen}
\author{Christopher Benjamin Deng}
\email{cdeng@colgate.edu}
\begin{document}

\maketitle

\begin{abstract}
  We classify self-adjoint first-order differential operators
  on weighted Bergman spaces on the unit disc and answer questions related to
  uncertainty principles for such operators. 
  Our main tools are the discrete series representations
  of $\mathrm{SU}(1,1)$. This approach has the promise to generalize to
  other bounded symmetric domains.
\end{abstract}

\section{Introduction}
In this paper we address several questions about weighted Bergman spaces on the unit disc from the perspective of representation theory.
We rederive an uncertainty principle by Soltani \cite{Soltani2021} and give a partial answer to an open question posed by Yong and Zhu \cite{Zhu2015}. The question is if there are self-adjoint operators on the Bergman space
for which the commutator is a non-zero multiple of the identity. We show
that this cannot happen if the operators are first-order differential operators.
Also, we give a classification of first-order differential operators
on the weighted Bergman space.
Such a classification has already been carried out for unweighted Bergman space in \cite{Villone1973,Villone1970}.
Moreover, we show that they
arise from the derived representation of the discrete series representations.
The representation theoretic approach we propose seems to generalize to bounded symmetric domains.
Lastly, we set up an isomorphism between Bergman spaces with shifted weights in the Hilbert space case,
and conjecture that this extends to the whole family of Bergman spaces.

\section{Weighted Bergman Space}
Let $H(\mathbb{D})$ be the space of all holomorphic functions on the unit disk $\mathbb{D}=\{z\in\mathbb{C}:|z|<1 \} $.
Let $dz$ denote the Euclidean measure on $\mathbb{C}$ and
define the weighted measure
$d\nu_\xi(z):=\frac{(\xi+1)}{\pi}(1-|{z}|^2)^\xi \,dz$. 
For $\xi>-1$, this is a probability measure and therefore
the weighted Bergman space defined by
\begin{align}
  \mathcal{A}_\xi^p:=\left\{ f\in H(\mathbb{D})\,\bigg|\,\|f\|_{\mathcal{A}_\xi^p}:=\left(\int_{\mathbb{D}}|f(z)|^2\,d\nu_\xi(z)\right)^{1/p} <\infty\right\} \nonumber
\end{align}
is non-trivial for $\xi>-1$.
The weighted Bergman space is a reproducing kernel
Banach space for $1\leq p<\infty$, and the reproducing kernel given by
$K_\xi(z,w) = \frac{1}{(1-z\overline{w})^{\xi+2}}$
satisfies
$f(z) = \int_\mathbb{D} f(w)K_\xi(z,w)\,d\nu_\xi(w)$ for $f\in \mathcal{A}_\xi^p$.

In this paper we will mostly focus on the Hilbert space
$\mathcal{A}^2_\xi$ with inner product
$$
\ip{f}{g}_{\xi} := \int_{\mathbb{D}} f(z)\overline{g(z)}\, d\nu_\xi(z),
$$
which has an orthonormal basis $e_n(z):=\sqrt{\frac{(\xi+2)_n}{n!}}z^n$.
We also denote the norm on this space by $\|\cdot\|_\xi$.

We define additional Hilbert spaces of analytic functions over $\mathbb{D}$
	\begin{align}
		\mathcal{A}_{\xi,n}^2:=\left\{f\in H(\mathbb{D})\,\bigg|\,\|f\|^2_{\xi,n}:=|f(0)|^2+\int_{\mathbb{D}}|f^{(n)}(z)|^2d\nu_\xi(z)<\infty\right\} \nonumber
	\end{align}
	for $n\in\mathbb{N}$.
        These spaces will later be identified as domains of certain differential operators.
	Notice that the spaces $\mathcal{A}_{\xi,n}^2$ satisfies the containment $\cdots\subseteq\mathcal{A}^2_{\xi,n+1}\subseteq\mathcal{A}_{\xi,n}^2\subseteq\cdots\subseteq\mathcal{A}_{\xi,1}^2\subseteq\mathcal{A}_{\xi}^2$.

\section{Uncertainty principles and Lie groups}
Let $H$ be a Hilbert space with inner product denoted $\ip{\cdot}{\cdot}$
and norm $\|\cdot\|$. For an operator $A:H\to H$ the domain will be denoted
$\mathcal{D}(A)$. Assuming that $A$ is densely defined and closable, let
$\overline{A}$ denote the closure of $A$. For self- or skew-adjoint operators $A,B:H\longrightarrow H$, the following uncertainty principle can easily be derived
\begin{align}
  |{\ip{[A,B]u}{u}}|\leq 2\|(A+a)u\| \|(B+b)u\| ~ \text{for all} ~ u\in\mathcal{D}(A)\cap\mathcal{D}(B)\cap \mathcal{D}([A,B]) \nonumber
\end{align}
where $a,b\in\mathbb{R}$.
In general it is not true that this uncertainty principle
extends to
\begin{align*}
  |{\ip{\overline{[A,B]}u}{u}}|\leq 2\|(A+a)u\| \|(B+b)u\| ~ \text{for all} ~ u\in\mathcal{D}(A)\cap\mathcal{D}(B)\cap \mathcal{D}(\overline{[A,B]}),
\end{align*}
since $[A,B]$ may not even be densely defined.
However, when the operators involved arise from representations of Lie groups
as described below, the uncertainty principle can be extended.

Recall that $(\pi,H)$ is a representation of a group $G$ if it satisfies
\begin{enumerate}[label=\text{\alph*)}]
  \setlength{\itemsep}{0pt}
\item $\pi(x):H\longrightarrow H$ is linear,
\item $\pi(e)=I$,
\item $\pi(xy)=\pi(x)\pi(y)$
\end{enumerate}
for $x,y\in G$.
The representation is called unitary if every $\pi(x)$ is unitary.

Let $G$ be a Lie group with Lie algebra $\mathfrak{g}$ and
exponential mapping $\exp:\mathfrak{g}\to G$
(we will also denote $\exp(X)$ by $e^X$). The Lie bracket on $\mathfrak{g}$
will be denoted $[\cdot,\cdot]$.

If $(\pi,H)$ is a representation
of the Lie group $G$, then the space of smooth vectors $H_\pi^\infty$ is
the collection of vectors $v$ in $H$ for which
$x\mapsto \pi(x)v$ is smooth from $G$ to $H$. It is well-known that
$H_\pi^\infty$ is dense in $H$. Therefore
an $X\in\mathfrak{g}$ defines a densely defined
differential operator $\pi(X)$ by
\begin{equation}\label{diffop}
\pi(X)v = \frac{d}{dt}\Big|_{t=0} \pi(\exp(tX))v
\end{equation}
with domain $H_\pi^\infty$.
If $\pi$ is unitary, the operator $\pi(X)$ is skew-symmetric and thus closable, and its
closure, denoted $\Pi(X)$, is skew-adjoint. 
The domain of $\Pi(X)$ is exactly the collection of vectors
for which (\ref{diffop}) converges in $H$.
It turns out that for $X,Y\in\mathfrak{g}$
the operator $[\Pi(X),\Pi(Y)]$ is closable and
its closure is $\Pi([X,Y])$.
By \cite{Christensen2004} the following uncertainty principle holds.
\begin{theorem} \label{Lie UP}
  Let $G$ be a Lie group with Lie algebra $\mathfrak{g}$, and let $(H,\pi)$ be a unitary representation of $G$.
  Suppose that $X,Y\in\mathfrak{g}$.
  Then for $x,y\in\mathbb{R}$,
  \begin{align}
    |{\ip{\Pi([X,Y])u}{u}}|\leq2\|(\Pi(X)+x)u\|\|(\Pi(Y)+y)u\| \nonumber
  \end{align}
  where $u\in\mathcal{D}(\Pi(X))\cap\mathcal{D}(\Pi(Y))\cap\mathcal{D}(\Pi([X,Y]))$.
\end{theorem}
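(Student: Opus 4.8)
The plan is to first prove the inequality for $u$ in the dense subspace $H_\pi^\infty$ of smooth vectors, where every operator in sight is genuinely defined and behaves algebraically, and then to extend it to the full intersection of domains by approximation. The subspace $H_\pi^\infty$ is invariant under $\Pi(X)$ and $\Pi(Y)$ and is contained in $\mathcal{D}(\Pi(X))\cap\mathcal{D}(\Pi(Y))$, hence also in $\mathcal{D}([\Pi(X),\Pi(Y)])$; and since the derived representation is a Lie algebra homomorphism on $H_\pi^\infty$, there $[\Pi(X),\Pi(Y)]u=\pi([X,Y])u=\Pi([X,Y])u$. Applying the elementary uncertainty principle stated at the start of this section to the skew-adjoint operators $A=\Pi(X)$ and $B=\Pi(Y)$ with shifts $a=x$, $b=y$ therefore gives
\begin{align}
|\langle\Pi([X,Y])u,u\rangle|\leq 2\|(\Pi(X)+x)u\|\,\|(\Pi(Y)+y)u\| \nonumber
\end{align}
for every $u\in H_\pi^\infty$. (Concretely, for skew-adjoint $A,B$ one has $\langle[A,B]u,u\rangle=2i\,\mathrm{Im}\langle Au,Bu\rangle$, and then Cauchy--Schwarz together with $\|(A+x)u\|^2=\|Au\|^2+x^2\|u\|^2$ yields the bound.)

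The second step, which I expect to be the main obstacle, is to pass from $H_\pi^\infty$ to the larger domain on which $\Pi([X,Y])$ is available only as the \emph{closure} of $[\Pi(X),\Pi(Y)]$. A sequence that is a core for one of the three operators need not be a core for the other two, so I would avoid arguing through abstract cores; instead I would use G\aa rding mollification, which produces a \emph{single} approximating sequence that is simultaneously good for all three generators. Fix $u$ in the common domain and an approximate identity $(\phi_n)\subset C_c^\infty(G)$ concentrating at the identity of $G$, and set $u_n:=\pi(\phi_n)u:=\int_G\phi_n(g)\pi(g)u\,dg$. Each $u_n$ lies in $H_\pi^\infty$, and the key input from the theory of smooth vectors is that for every $Z\in\mathfrak{g}$ and every $u\in\mathcal{D}(\Pi(Z))$ one has convergence of $\pi(\phi_n)u\to u$ in the graph norm of $\Pi(Z)$; that is, $u_n\to u$ and $\Pi(Z)u_n\to\Pi(Z)u$.

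Applying this with the same sequence $u_n$ to $Z=X$, $Z=Y$, and $Z=[X,Y]$ (each permissible since $u$ lies in all three domains, and here the fact that $[X,Y]$ is itself an element of $\mathfrak{g}$ is what makes the mollification uniform), I would then let $n\to\infty$ in the inequality from the first step. The right-hand side converges to $2\|(\Pi(X)+x)u\|\,\|(\Pi(Y)+y)u\|$, while $\langle\Pi([X,Y])u_n,u_n\rangle\to\langle\Pi([X,Y])u,u\rangle$ because $\Pi([X,Y])u_n\to\Pi([X,Y])u$ and $u_n\to u$. This yields the inequality for the original $u$ and completes the proof. The delicate point is precisely the graph-norm convergence $\Pi(Z)\pi(\phi_n)u\to\Pi(Z)u$: since $\pi(\phi_n)$ does not commute with $\Pi(Z)$ exactly, one must differentiate the translated mollifier, identify the resulting correction term, and show it vanishes as $\phi_n$ concentrates at the identity. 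I expect this to be exactly the technical content imported from \cite{Christensen2004}.
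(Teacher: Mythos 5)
The paper does not actually prove Theorem \ref{Lie UP}; it invokes the result directly from \cite{Christensen2004}, so your proposal is being compared against the proof in that reference rather than one in the text. Your two-step plan is correct and is essentially that argument: the elementary commutator inequality applies on the G\aa rding/smooth vectors (where $[\Pi(X),\Pi(Y)]u=\Pi([X,Y])u$), and the passage to the full domain is achieved by the single mollifying sequence $u_n=\pi(\phi_n)u$, which converges in graph norm simultaneously for $Z=X$, $Z=Y$, and $Z=[X,Y]$. The step you defer is indeed the technical heart of \cite{Christensen2004} and is true: writing $\Pi(Z)\pi(\phi_n)u-\pi(\phi_n)\Pi(Z)u=\pi(\psi_n)u$, where $\psi_n$ combines the difference of left- and right-invariant derivatives of $\phi_n$ with a modular-function term, one checks that $\int_G\psi_n=0$ and $\|\psi_n\|_{L^1}$ stays bounded as $\phi_n$ concentrates at the identity, so $\|\pi(\psi_n)u\|\leq\|\psi_n\|_{L^1}\sup_{h\in\mathrm{supp}\,\phi_n}\|\pi(h)u-u\|\to0$, giving the claimed graph-norm convergence.
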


\section{Uncertainty Principles on weighted Bergman spaces}
In the following we will investigate properties of
operators arising from the discrete series representations
of the Lie group $\mathrm{SU}(1,1)$ of $2\times 2$ complex matrices with determinant 1
which leave the form $z_1\overline{w}_1-z_2\overline{w}_2$ invariant:
	\begin{align}
		\mathrm{SU}(1,1):=\left\{
		\begin{bmatrix}
			\alpha & \beta \\
			\overline{\beta} & \overline{\alpha}
		\end{bmatrix}
		:\alpha,\beta\in\mathbb{C},|{\alpha}|^2-|{\beta}|^2=1\right\}. \nonumber
	\end{align}
	The corresponding Lie algebra is
	\begin{align}
          \mathfrak{su}(1,1):=\left\{
          \begin{bmatrix}
			ia & b \\
			\overline{b} & -ia
		\end{bmatrix}
		:a\in\mathbb{R},b\in\mathbb{C}\right\}. \nonumber
	\end{align}
	For an element of $\mathrm{SU}(1,1)$,
	\begin{align}
		x=
		\begin{bmatrix}
			\alpha & \beta \\
			\overline{\beta} & \overline{\alpha}
		\end{bmatrix} \nonumber 
		\in \mathrm{SU}(1,1),
	\end{align} 
	consider the discrete series representation
        $\pi_\xi(x):\mathcal{A}_\xi^2\longrightarrow\mathcal{A}_\xi^2$ given by 
	\begin{align}
		\pi_\xi(x)f(z)
			=\dfrac{1}{(-\overline{\beta}z+\alpha)^{\xi+2}}f\left(\dfrac{\alpha z-\beta}{-\overline{\beta}z+\alpha}\right). \nonumber
	\end{align}
	It is known that $(\pi_\xi,\mathcal{A}_\xi^2)$ is a
        unitary representation of $\mathrm{SU}(1,1)$ when $\xi>-1$ is an integer.
        Moreover, it defines a unitary representation of the
        universal covering group of $\mathrm{SU}(1,1)$
        for all $\xi>-1$. In this paper we do not need to make
        a distinction between these groups, since their Lie algebras
        are the same and the exponential mapping is a local diffeomorphism.

	Let us define the following elements
        $\frak{X},\frak{Y},\frak{Z}\in\mathfrak{su}(1,1)$ by
	\begin{align}
		\frak{X}:=
			\begin{bmatrix}
				i & 0 \\
				0 & -i
			\end{bmatrix},
			~
		\frak{Y}:=
			\begin{bmatrix}
				0 & 1 \\
				1 & 0
			\end{bmatrix},
			~
		\frak{Z}:=
			\begin{bmatrix}
				i & -i \\
				i & -i
			\end{bmatrix},  \nonumber
	\end{align}
	which are a basis for $\mathfrak{su}(1,1)$.
	It follows that
	\begin{equation}
		\begin{cases}
                  \pi_\xi(\frak{X})f(z)
                  =-2izf'(z)-(\xi+2)if(z), \\
                  \pi_\xi(\frak{Y})f(z)
                  =z^2f'(z)-f'(z)+(\xi+2)zf(z),  \\
                  \pi_\xi(\frak{Z})f(z)
                  =if'(z)-2izf'(z)+iz^2f'(z)
                  +(\xi+2)izf(z)-(\xi+2)if(z).
		\end{cases} \label{operators}
	\end{equation}
	In the following, we show that an uncertainty principle on $\mathcal{A}_\xi^2$ presented in \cite{Soltani2021} can be obtained from the discrete series
        representations. 
	Define $\frak{W}\in\mathfrak{su}(1,1)$ to be
	\begin{align}
		\frak{W}:=
		\begin{bmatrix}
			0 & -i \\
			i & 0
		\end{bmatrix} = \frak{Y}-\frak{X}. \nonumber 
	\end{align}
	We see that that $\pi_\xi(\frak{W})=\pi_\xi(\frak{Z})-\pi_\xi(\frak{X})$, and since $[\frak{W},\frak{Y}]=-2\frak{X}$, then
	\begin{align}
		\pi_\xi([\frak{W},\frak{Y}])f(z)
			=-2\pi_\xi(\frak{X})f(z)
			=4izf'(z)+(\xi+2)2if(z). \label{2X}
	\end{align}

	\begin{lemma} \label{WY domain}

          The domain of $\Pi_\xi(\frak{X})$ 
          is $\mathcal{A}_{\xi,1}^2$.
	\end{lemma}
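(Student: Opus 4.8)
The plan is to diagonalize $\Pi_\xi(\frak{X})$ against the orthonormal basis $e_n$ and read off its domain as a weighted $\ell^2$ condition on the coefficients. First I would apply the formula for $\pi_\xi(\frak{X})$ in (\ref{operators}) to the basis. Since $e_n(z)=\sqrt{(\xi+2)_n/n!}\,z^n$ satisfies $ze_n'(z)=n\,e_n(z)$, one gets
\begin{align}
  \pi_\xi(\frak{X})e_n = -2iz e_n'(z)-(\xi+2)i e_n = -i(2n+\xi+2)e_n, \nonumber
\end{align}
so each $e_n$ is an eigenvector with purely imaginary eigenvalue $\lambda_n=-i(2n+\xi+2)$, as it must be for a skew-adjoint operator.

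The key structural step is to turn this eigenvalue computation into a description of $\mathcal{D}(\Pi_\xi(\frak{X}))$. Let $A_0$ denote the restriction of $\pi_\xi(\frak{X})$ to the (dense) space of polynomials, which lie in $H_\pi^\infty$; it is skew-symmetric and diagonalized by $\{e_n\}$. The closure $\overline{A_0}$ is then the diagonal operator $D$ with
\begin{align}
  \mathcal{D}(D)=\Big\{f=\sum_n c_n e_n\in\mathcal{A}_\xi^2 : \sum_n (2n+\xi+2)^2\abs{c_n}^2<\infty\Big\}, \nonumber
\end{align}
and because its eigenvalues are purely imaginary, $D$ is itself skew-adjoint. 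Now $\Pi_\xi(\frak{X})$ is a skew-adjoint operator extending $A_0$, hence extending $\overline{A_0}=D$; since skew-adjoint operators are maximal among skew-symmetric ones, $\Pi_\xi(\frak{X})=D$. Equivalently, one may invoke Stone's theorem for the one-parameter unitary group $t\mapsto\pi_\xi(e^{t\frak{X}})$, whose generator is $\Pi_\xi(\frak{X})$ and for which the $e_n$ are eigenvectors. This is the step I expect to require the most care, since it is where the passage from the formal action on polynomials to the genuine domain of the closure is justified.

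It remains to identify $\mathcal{D}(D)$ with $\mathcal{A}_{\xi,1}^2$, which is a routine coefficient computation. Writing $c_n=\ip{f}{e_n}_{\xi}$ and using $\|z^m\|_\xi^2=m!/(\xi+2)_m$ together with the orthogonality of the monomials, I would expand $f'(z)=\sum_{n\geq1} c_n\sqrt{(\xi+2)_n/n!}\,n z^{n-1}$ and compute
\begin{align}
  \|f\|_{\xi,1}^2=\abs{f(0)}^2+\int_{\mathbb{D}}\abs{f'(z)}^2\,d\nu_\xi(z)=\abs{c_0}^2+\sum_{n\geq1} n(n+\xi+1)\abs{c_n}^2, \nonumber
\end{align}
after simplifying $\frac{(\xi+2)_n}{(\xi+2)_{n-1}}=n+\xi+1$ and $\frac{(n-1)!}{n!}=\frac1n$.

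Finally I would compare the two weights. For $n\geq1$ one has $n(n+\xi+1)\geq \xi+2>0$ (using $\xi>-1$), and both $n(n+\xi+1)$ and $(2n+\xi+2)^2$ are $\asymp n^2$, so the ratio $(2n+\xi+2)^2/\big(n(n+\xi+1)\big)$ is bounded above and below by positive constants; the $n=0$ terms are harmless. Hence $\sum_n(2n+\xi+2)^2\abs{c_n}^2<\infty$ if and only if $\abs{c_0}^2+\sum_{n\geq1}n(n+\xi+1)\abs{c_n}^2<\infty$, which gives $\mathcal{D}(\Pi_\xi(\frak{X}))=\mathcal{A}_{\xi,1}^2$ with equivalent norms. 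Everything here is elementary; the only real content is the closure argument in the second paragraph.
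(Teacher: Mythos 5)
Your proposal is correct and takes essentially the same route as the paper: both diagonalize the operator on coefficients (eigenvalues $-i(2n+\xi+2)$ on the monomials/basis vectors) and then identify the domain by comparing the resulting weighted $\ell^2$ condition with the norm $\|\cdot\|_{\xi,1}$. Your explicit identification of $\Pi_\xi(\mathfrak{X})$ with the maximal diagonal operator via skew-adjoint maximality (or Stone's theorem) is a step the paper leaves implicit—it works with a generic $f\in\mathcal{D}(\Pi_\xi(\mathfrak{X}))$ and invokes the earlier characterization of the domain of $\Pi(X)$—so your version makes the inclusion $\mathcal{A}_{\xi,1}^2\subseteq\mathcal{D}(\Pi_\xi(\mathfrak{X}))$ fully explicit.
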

		\begin{proof} 
			We will adopt the notation that $(\xi+2)_k=\frac{\Gamma(k+\xi+2)}{\Gamma(\xi+2)}$.
			From \cite{Soltani2021}, we have for $f\in\mathcal{A}_{\xi}^2$ and $g\in\mathcal{A}_{\xi,n}^2$ that
			\begin{align}
				\|f\|_{\xi}^2&=\sum_{k=0}^\infty\abs{a_k}^2\dfrac{k!}{(\xi+2)_k} \quad\text{and}\quad
				\|g\|_{\xi,n}^2=\abs{b_0}^2+\sum_{k=1}^\infty\abs{b_k}^2\dfrac{k^{2n}k!}{(\xi+2)_k} \nonumber
			\end{align}
			where $a_k,b_k\in\mathbb{C}$. 
			Now, let $f\in\mathcal{D}(\Pi_\xi(\frak{X}))$.
			Since $f$ is holomorphic,
			\begin{align}
				\Pi_\xi(\frak{X})f(z)
				&=-2iz\dfrac{d}{dz}\sum_{k=0}^\infty a_kz^k-(\xi+2)i\sum_{k=0}^\infty a_kz^k \nonumber \\
				&=\sum_{k=0}^\infty-2ika_kz^k-\sum_{k=0}^\infty(\xi+2)ia_kz^k
					=\sum_{k=0}^\infty-i(2k+\xi+2)a_kz^k \nonumber
			\end{align}
			for $a_k\in\mathbb{C}$.
			Thus, 
			\begin{align}
				\|\Pi_\xi(\frak{X})f\|_{\xi}^2
				&=\sum_{k=0}^\infty(2k+\xi+2)^2|a_k|^2\dfrac{k!}{(\xi+2)_k}. \label{WY norm}
			\end{align}
			For any $k\geq0$ we have that
			\begin{align}
				(2k+\xi+2)^2
				&=4k^2+4k\xi+8k+\xi^2+4\xi+4 \nonumber \\
				&\leq4k^2+8k^2\xi+8k^2+4k^2\xi^2+8k^2\xi+4k^2
					=4(\xi+2)^2k^2. \nonumber 
			\end{align}
			Also, for any $k\geq1$, it follows that
			\begin{align}
				k^2
					\leq4k^2+4k\underbrace{(\xi+2)}_{>0}+\xi^2+4\underbrace{(\xi+1)}_{>0}
					=(2k+\xi+2)^2. \nonumber
			\end{align}
			Letting $c:=(\xi+2)^2|{a_0}|^2$ (the $k=0$ term from $\|\pi_\xi(X)\|_{\mathcal{A}_\xi^2}^2$), we have that
			\begin{align}
				\left(|{a_0}|^2+\sum_{k=1}^\infty |{a_k}|^2\dfrac{k^2k!}{(\xi+2)_k}\right)+(c-|{a_0}|^2) 
				&\leq\sum_{k=0}^\infty |{a_k}|^2\dfrac{(2k+\xi+2)^2k!}{(\xi+2)_k} \nonumber \\
				&\leq4(\xi+2)^2\left(|{a_0}|^2+\sum_{k=1}^\infty\abs{a_k}^2\dfrac{k^2k!}{(\xi+2)_k}\right) \nonumber
			\end{align}
			which gives the inequality
			\begin{align}
				\|f\|_{\xi,1}^2+(c-\abs{a_0}^2)
					\leq\|{\Pi_\xi(\frak{X})f}\|_{\xi}^2
					\leq4(\xi+2)^2\|{f}\|_{\xi,1}^2. \nonumber
			\end{align}
			We conclude that $\mathcal{D}(\Pi_\xi(\frak{X}))=\mathcal{A}_{\xi,1}^2$. 
		\end{proof}
	From \cite{Soltani2021}, the operators $\frac{d}{dz}$ and $z^2\frac{d}{dz}+(\xi+2)z$ have domain $\mathcal{A}_{\xi,1}^2$.
	Since $\Pi_\xi(\frak{Y})$ and $\Pi_\xi(\frak{W})$ are linear combinations of these operators, then their domains contain $\mathcal{A}_{\xi,1}^2$, i.e., $\mathcal{D}(\Pi_\xi(\frak{Y})),\mathcal{D}(\Pi_\xi(\frak{W}))\supseteq\mathcal{A}_{\xi,1}^2$.
	\begin{theorem} \label{WB UP}
          For $f\in\mathcal{A}_{\xi,1}^2$ and $w,y\in\mathbb{R}$, 
          \begin{align*}
            &(\xi+2)\|{f}\|_{\mathcal{A}_\xi^2}^2+2\ip{zf'}{f}_{\xi} \\
            &\qquad\leq\left\|{(1+z^2)f'+((\xi+2)z+iw)f)}\right\|_{\xi} 
            \left\|{(z^2-1)f'+((\xi+2)z+y)f}\right\|_{\xi}.
          \end{align*}
          Note that this uncertainty principle is identical to (2.5) in \cite{Soltani2021}.
        \end{theorem}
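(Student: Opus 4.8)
The plan is to specialize the Lie-group uncertainty principle of Theorem~\ref{Lie UP} to the pair $X=\frak{W}$, $Y=\frak{Y}$ and then rewrite both sides using the concrete expressions in \eqref{operators} and \eqref{2X}. First I would settle the domain question: by Lemma~\ref{WY domain} and the remark following it, every $f\in\mathcal{A}_{\xi,1}^2$ lies in $\mathcal{D}(\Pi_\xi(\frak{W}))\cap\mathcal{D}(\Pi_\xi(\frak{Y}))$, and since $\Pi_\xi([\frak{W},\frak{Y}])=-2\Pi_\xi(\frak{X})$ has domain exactly $\mathcal{A}_{\xi,1}^2$, such an $f$ also lies in $\mathcal{D}(\Pi_\xi([\frak{W},\frak{Y}]))$. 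Theorem~\ref{Lie UP} then gives, for all real $x,y$,
\[
  \abs{\ip{\Pi_\xi([\frak{W},\frak{Y}])f}{f}_\xi}\le 2\,\|(\Pi_\xi(\frak{W})+x)f\|_\xi\,\|(\Pi_\xi(\frak{Y})+y)f\|_\xi .
\]

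For the left-hand side I would invoke \eqref{2X} to write $\Pi_\xi([\frak{W},\frak{Y}])f=4izf'+2(\xi+2)if$ and expand the inner product as $4i\ip{zf'}{f}_\xi+2(\xi+2)i\|f\|_\xi^2$. The decisive observation is that $\ip{zf'}{f}_\xi$ is real and nonnegative: since $z\tfrac{d}{dz}$ sends $z^k$ to $kz^k$, one has $\ip{zf'}{f}_\xi=\sum_k k\abs{a_k}^2 k!/(\xi+2)_k\ge 0$. Hence the inner product is purely imaginary, and its modulus equals the sum of the nonnegative coefficients, $4\ip{zf'}{f}_\xi+2(\xi+2)\|f\|_\xi^2=2\bigl((\xi+2)\|f\|_{\mathcal{A}_\xi^2}^2+2\ip{zf'}{f}_\xi\bigr)$, which is exactly twice the left-hand side of the assertion.

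For the right-hand side I would simplify the two operators. From \eqref{operators}, $\Pi_\xi(\frak{Y})f=(z^2-1)f'+(\xi+2)zf$, so $(\Pi_\xi(\frak{Y})+y)f=(z^2-1)f'+((\xi+2)z+y)f$ immediately. Combining the formulas for $\Pi_\xi(\frak{Z})$ and $\Pi_\xi(\frak{X})$ in \eqref{operators} via $\pi_\xi(\frak{W})=\pi_\xi(\frak{Z})-\pi_\xi(\frak{X})$ gives $\Pi_\xi(\frak{W})f=i\bigl[(1+z^2)f'+(\xi+2)zf\bigr]$. I would then factor the unimodular $i$ out of $(\Pi_\xi(\frak{W})+x)f$, which preserves the norm, leaving $i\bigl[(1+z^2)f'+((\xi+2)z-ix)f\bigr]$, and choose $x=-w$ so that the coefficient becomes $(\xi+2)z+iw$; this is legitimate since $w$ ranges over all of $\mathbb{R}$. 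Substituting these two norms and dividing the inequality by $2$ yields precisely the stated estimate.

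The only genuinely delicate step is the collapse of the left-hand modulus: one must be sure that $\ip{zf'}{f}_\xi$ is real so that $\abs{4i\ip{zf'}{f}_\xi+2(\xi+2)i\|f\|_\xi^2}$ is the plain sum of the two (nonnegative) terms rather than a square-root combination. The remaining work—keeping track of the factor $i$ in the $\frak{W}$-term and of the sign relating $x$ and $w$—is routine bookkeeping, so I anticipate no further obstacle.
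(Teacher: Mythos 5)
Your proposal is correct and follows exactly the paper's route: establish that $\mathcal{A}_{\xi,1}^2$ lies in the triple intersection of domains via Lemma~\ref{WY domain} and the remark after it, then apply Theorem~\ref{Lie UP} to the pair $\frak{W},\frak{Y}$. The paper's own proof stops at ``apply Theorem~\ref{Lie UP}''; your additional bookkeeping---computing $\Pi_\xi(\frak{W})f=i\bigl[(1+z^2)f'+(\xi+2)zf\bigr]$, noting that $\ip{zf'}{f}_\xi=\sum_k k\abs{a_k}^2\|z^k\|_\xi^2\ge 0$ so the modulus collapses, and absorbing the factor $i$ with $x=-w$---is exactly the omitted translation into the stated explicit form, and it is carried out correctly.
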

        \begin{proof}
          We have from Lemma \ref{WY domain} that $\mathcal{D}(\Pi_\xi([\frak{W},\frak{Y}]))=\mathcal{A}_{\xi,1}^2$. 
          Then, $\mathcal{D}(\Pi_\xi([\frak{W},\frak{Y}]))\cap\mathcal{D}(\Pi_\xi(\frak{W}))\cap\mathcal{D}(\Pi_\xi(\frak{Y}))=\mathcal{A}_{\xi,1}^2$.
          Apply Theorem \ref{Lie UP} to obtain the inequality.
        \end{proof}
        
        There are, of course, many uncertainty principles for Bergman spaces, but we only present this one here. One may ask, if there is a distinguished
        uncertainty principle.
        For example, Zhu posed the question of whether there exists an uncertainty principle on $\mathcal{A}_\xi^2$ such that the commutator is a multiple of the identity operator \cite{Zhu2015}. 
	The next result shows that it is not possible to find
        such an uncertainty principle from the representation
        $(\pi_\xi,\mathcal{A}_\xi^2)$. As it turns out (see the next section)
        this also shows that
        such an uncertainty principle cannot be found for
        self-adjoint first-order differential operators on the Bergman space.
        The reason is that any first-order differential operator
        is of the form $i\pi_\xi(X)+d$ for $X\in \mathfrak{su}(1,1)$
        and $d\in \mathbb{R}$.
        
	\begin{theorem} \label{Zhu1}
		Given $(\pi_\xi,\mathcal{A}_\xi^2)$, there are no $A,B\in\mathfrak{su}(1,1)$ such that $\Pi_\xi([A,B])=\eta$ for $\eta\in\mathbb{C}$\textbackslash$\{0\}$. 
     \end{theorem}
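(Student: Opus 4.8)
The plan is to prove the stronger statement that $\Pi_\xi(C)=\eta$ with $\eta\neq 0$ is impossible for \emph{any} $C\in\mathfrak{su}(1,1)$, not merely for a commutator. Since $\mathfrak{su}(1,1)$ is closed under the bracket we have $[A,B]\in\mathfrak{su}(1,1)$, so this immediately yields the theorem; it is also exactly what is needed for the remark that every first-order differential operator has the form $i\pi_\xi(X)+d$. I would fix the real basis $\{\frak{X},\frak{Y},\frak{Z}\}$ and write $C=\alpha\frak{X}+\beta\frak{Y}+\gamma\frak{Z}$ with $\alpha,\beta,\gamma\in\mathbb{R}$, reducing the question to a computation with the explicit operators in (\ref{operators}).

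The key computation is to record the action of the derived operators on the monomials $z^n$, which are smooth vectors and hence lie in the domain of $\Pi_\xi(C)$. From (\ref{operators}) one reads off that $\Pi_\xi(\frak{X})$ is diagonal, $\Pi_\xi(\frak{X})z^n=-i(2n+\xi+2)z^n$, while $\Pi_\xi(\frak{Y})$ and $\Pi_\xi(\frak{Z})$ shift the degree by $\pm 1$; in particular the coefficient of $z^{n+1}$ in $\Pi_\xi(\frak{Y})z^n$ is $n+\xi+2$ and the coefficient of $z^{n+1}$ in $\Pi_\xi(\frak{Z})z^n$ is $i(n+\xi+2)$. Consequently the raising coefficient of $\Pi_\xi(C)z^n$ is $(n+\xi+2)(\beta+i\gamma)$, and the diagonal part coming from $\alpha\frak{X}+\gamma\frak{Z}$ has the $n$-dependent eigenvalue $-i(\alpha+\gamma)(2n+\xi+2)$.

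The condition $\Pi_\xi(C)=\eta I$ means $\Pi_\xi(C)g=\eta g$ on the domain, in particular for $g=1$ and $g=z$. Evaluating on the constant function $1$ forces the $z^1$-coefficient $(\xi+2)(\beta+i\gamma)$ to vanish, and since $\beta,\gamma$ are real this gives $\beta=\gamma=0$. The operator then collapses to $\alpha\Pi_\xi(\frak{X})$, which is diagonal with eigenvalues $-i\alpha(2n+\xi+2)$. Matching the eigenvalue on $1$ and on $z$ yields $\eta=-i\alpha(\xi+2)$ and $\eta=-i\alpha(\xi+4)$ simultaneously, so $-2i\alpha=0$, hence $\alpha=0$ and $\eta=0$, contradicting $\eta\neq 0$.

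The arithmetic is elementary, so the only thing to get right is the lowest-weight structure of the discrete series that drives it: $\Pi_\xi(\frak{X})$ is diagonalized by the monomials with distinct, unbounded eigenvalues, while the complementary basis directions act as pure ladder operators, and these two features together forbid any nonzero Lie-algebra element from acting as a scalar. A more structural alternative would be to observe that $\Pi_\xi(C)=\eta I$ forces $\Pi_\xi([C,Y])=[\eta I,\Pi_\xi(Y)]=0$ for all $Y$, so by injectivity of the derived representation $C$ is central in $\mathfrak{su}(1,1)$ and hence $C=0$; I expect the main obstacle there to be the closure subtleties in identifying $\Pi_\xi([C,Y])$ with $[\Pi_\xi(C),\Pi_\xi(Y)]$, which is precisely why I would favor the explicit two-monomial computation.
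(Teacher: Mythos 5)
Your proposal is correct and takes essentially the same route as the paper: both reduce the theorem to the stronger claim that $\Pi_\xi(C)$ is never a nonzero multiple of the identity for \emph{any} $C\in\mathfrak{su}(1,1)$ (since $[A,B]\in\mathfrak{su}(1,1)$), and both verify this by explicit computation with the basis $\frak{X},\frak{Y},\frak{Z}$ and the formulas (\ref{operators}). The only cosmetic difference is that you extract the constraints by applying the operator to the monomials $1$ and $z$, whereas the paper reads them off the coefficients of the resulting first-order differential operator, noting that the $z\frac{d}{dz}$ coefficient and the constant term are both proportional to $\sigma+\lambda$, so forcing the former to vanish kills the would-be scalar as well.
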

              \begin{proof}
                Since $\mathfrak{g}=\mathfrak{su}(1,1)$ is simple (so $[\mathfrak{g},\mathfrak{g}]=\mathfrak{g}$) and $\frak{X},\frak{Y},\frak{Z}$ form a basis, it is enough
                to show that $\Pi_\xi(\sigma \frak{X} +\tau \frak{Y} +\lambda \frak{Z})$ is
                never a non-zero multiple of the identity
                for $\sigma,\tau,\lambda\in\mathbb{R}$.
               	Notice
                \begin{align}
                  \pi_\xi(\sigma \frak{X} +\tau \frak{Y} + \lambda \frak{Z}) &=				\sigma\pi_\xi(\frak{X})+\tau\pi_\xi(\frak{Y})+\lambda\pi_\xi(\frak{Z}) \nonumber \\
                                                                             &=[(\tau+i\lambda)z^2+\overbrace{(-i2\sigma-i2\lambda)}^{*}z+(-\tau+i\lambda)]\dfrac{d}{dz} \label{skew adjoint operators} \\
				&~~~~~~~~~~~~+[(\xi+2)(\tau+i\lambda)z+\underbrace{(\xi+2)i(-\sigma-\lambda)}_{**}]. \nonumber
			\end{align}	
			To obtain a multiple of the identity,
                        the coefficient $*$ must be 0, which means $\sigma+\lambda=0$.
			However, that would mean $**$ is also 0, which tells us that we cannot obtain a non-zero multiple of the identity.
		\end{proof}
		
\section{First-order Differential Operators on weighted Bergman spaces}
In this section, we characterize self-adjoint extensions of
first-order symmetric differential
operators on $\mathcal{A}_\xi^2$. Many of the results found
here are already in \cite{Carlson2019}, but the connection
to representation theory is new. In particular we show
that \emph{all} first-order symmetric differential operators come
from the discrete series representations of $\mathrm{SU}(1,1)$.
Using this characterization, we also provide a partial answer 
to a question about uncertainty principles
raised by Yong and Zhu \cite{Zhu2015}.
Some of the operator theoretic results in this section have also been proven in more generality in \cite{Carlson2019}, but we wish to highlight the connection with representation theory and include the details here.

	Let $P(\mathbb{D})$ be the space of polynomials on $\mathbb{D}$. 
	Naturally, $P(\mathbb{D})\subseteq\mathcal{A}_\xi^2$.
	We define first-order differential operators be of the form $f\frac{d}{dz}+g$ where $f,g\in\mathcal{A}_\xi^2$.
	Denote $f(z):=\sum_{k=0}^\infty a_kz^k$ and $g(z):=\sum_{k=0}^\infty b_kz^k$ for $a_k,b_k\in\mathbb{C}$.
	Let $L:=f\frac{d}{dz}+g$ where $\mathcal{D}(L)=P(\mathbb{D})$.
	We first justify that the domain makes sense.
	\begin{lemma} \label{density}
		If $p$ is a polynomial, then $Lp$ is in $\mathcal{A}^2_\xi$ and 
		therefore the operator $L$ is densely defined.
	\end{lemma}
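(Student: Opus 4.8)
The plan is to show directly that $Lp$ has finite $\mathcal{A}_\xi^2$-norm, exploiting the fact that polynomials are bounded on $\mathbb{D}$. Writing $Lp = fp' + gp$, I first note that $Lp$ is holomorphic on $\mathbb{D}$, being a sum of products of the holomorphic functions $f,g\in\mathcal{A}_\xi^2\subseteq H(\mathbb{D})$ with the polynomials $p$ and $p'$. Thus the only thing to verify is square-integrability against $d\nu_\xi$.

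The key observation is that although $f$ and $g$ need not be bounded, both $p$ and its derivative $p'$ are: each is continuous on the compact closure $\overline{\mathbb{D}}$, so $M:=\max\{\sup_{z\in\mathbb{D}}|p(z)|,\ \sup_{z\in\mathbb{D}}|p'(z)|\}<\infty$. Pulling these pointwise bounds out of the integral defining the norm gives $\|fp'\|_\xi\le M\|f\|_\xi$ and $\|gp\|_\xi\le M\|g\|_\xi$, and then by the triangle inequality
\[
\|Lp\|_\xi \le \|fp'\|_\xi + \|gp\|_\xi \le M\big(\|f\|_\xi + \|g\|_\xi\big) < \infty,
\]
where finiteness of the right-hand side uses $f,g\in\mathcal{A}_\xi^2$. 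Hence $Lp\in\mathcal{A}_\xi^2$, so $L$ genuinely maps $\mathcal{D}(L)=P(\mathbb{D})$ into $\mathcal{A}_\xi^2$.

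For the density claim I would invoke the orthonormal basis $\{e_n\}_{n\ge 0}$ with $e_n(z)=\sqrt{(\xi+2)_n/n!}\,z^n$. Since each $e_n$ is a scalar multiple of the monomial $z^n$, every finite linear combination of the $e_n$ is a polynomial; as finite combinations of an orthonormal basis are dense in a Hilbert space, $P(\mathbb{D})$ is dense in $\mathcal{A}_\xi^2$. Combined with $Lp\in\mathcal{A}_\xi^2$ for every $p$, this shows $L$ is densely defined.

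I do not expect a genuine obstacle here. The single point requiring care is recognizing that the multipliers $p$ and $p'$ are bounded on the disc even when the coefficients $f,g$ are not, which is precisely what lets the products remain in the weighted $L^2$ space; the same argument would work verbatim for any bounded holomorphic multiplier in place of $p$.
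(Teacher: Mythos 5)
Your proof is correct, but it takes a genuinely different route from the paper's. You exploit the fact that $p$ and $p'$ are bounded on $\mathbb{D}$ (being continuous on $\overline{\mathbb{D}}$), pull those sup bounds out of the integral to get $\|fp'\|_\xi\le M\|f\|_\xi$ and $\|gp\|_\xi\le M\|g\|_\xi$, and finish with the triangle inequality --- a one-step multiplier estimate with no series manipulation. The paper instead argues by induction on the degree of $p$: the base case is $La_0=a_0g$, and in the inductive step it expands $Lz^{M+1}=(M+1)z^Mf+z^{M+1}g$ into the power series of $f$ and $g$, bounding $\bigl\|\sum_k a_kz^{k+M}\bigr\|_\xi\le\|f\|_\xi$ via orthogonality of monomials together with the fact that $\|z^n\|_\xi$ decreases in $n$ (equivalently, multiplication by $z^M$ is a contraction on $\mathcal{A}_\xi^2$). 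Your approach is shorter and more transparent, and it isolates the real mechanism: multiplication by any function bounded on $\mathbb{D}$ preserves the weighted $L^2$ space. The paper's series-based argument is clunkier but stays within the orthonormal-expansion framework used throughout the rest of the paper (e.g., in the domain computation for $\Pi_\xi(\frak{X})$ and the symmetry classification), and its monomial-norm comparison reappears later in Lemma \ref{equal closure}. One additional merit of your write-up: you justify the density of $P(\mathbb{D})$ explicitly via the orthonormal basis $\{e_n\}$, a point the paper leaves implicit. A minor caution on your closing remark: replacing $p$ by a general bounded holomorphic multiplier works for the products $fp'$ and $gp$ separately, but applying $L$ itself to a bounded holomorphic $h$ would also require $h'$ to be bounded, which does not follow from boundedness of $h$; this does not affect the lemma, since polynomials have polynomial derivatives.
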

		\begin{proof}
			We will proceed by induction.
			When $N=0$, notice that $La_0=a_0g$, and since $g\in\mathcal{A}_\xi^2$, then $La_0\in\mathcal{A}_\xi^2$.
			Assume $L\sum_{k=0}^Mc_kz^k\in\mathcal{A}_\xi^2$ for some $M$.
			Since $L$ is a linear operator,
			\begin{align}
				L\sum_{k=0}^{M+1}c_kz^k
				&=L\left(\sum_{k=0}^Mc_kz^k+c_{M+1}z^{M+1}\right)
					=\underbrace{L\sum_{k=0}^Mc_kz^k}_{\in\mathcal{A}_\xi^2}+c_{M+1}Lz^{M+1}. \nonumber
			\end{align}
			Since $\abs{z}<1$, it follows that
			\begin{align}
				\|Lz^{M+1}\|_{\xi}
				&=\left\|\sum_{k=0}^\infty(M+1)a_kz^{k+M}+\sum_{k=0}^\infty b_kz^{k+M+1}\right\|_{\xi} \nonumber \\
				&\leq|M+1|\underbrace{\left\|\sum_{k=0}^\infty a_kz^{k+M}\right\|_{\xi}}_{\leq\|g\|_{\xi}}+\underbrace{\left\|\sum_{k=0}^\infty b_kz^{k+M+1}\right\|_{\xi}}_{\leq\|g\|_{\xi}}
					<\infty. \nonumber
			\end{align}
			Since $Lz^{M+1}\in\mathcal{A}_\xi^2$, then $L\sum_{k=0}^{M+1}c_kz^k\in\mathcal{A}_\xi^2$.
			Thus, $L$ is densely defined.
		\end{proof}
	Define the operator $T$ by $Th= fh'+gh$ and let
	$\mathcal{D}(T)$ be the set of all $h\in\mathcal{A}_\xi^2$
        for which $fh'+gh\in\mathcal{A}_\xi^2$. 
	Notice that
	$\mathcal{D}(L)\subseteq\mathcal{D}(T)\subseteq\mathcal{A}_\xi^2$.	
	\begin{lemma} \label{closed T}
		The operator $T$ is closed.
	\end{lemma}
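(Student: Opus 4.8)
The plan is to verify closedness directly from the graph criterion. I would take an arbitrary sequence $h_n\in\mathcal{D}(T)$ with $h_n\to h$ and $Th_n\to\psi$ in $\mathcal{A}_\xi^2$, and show that $h\in\mathcal{D}(T)$ with $Th=\psi$. The governing principle is that $\mathcal{A}_\xi^2$ is a reproducing kernel Hilbert space, so norm convergence is far stronger than it appears: it forces locally uniform convergence of the holomorphic functions, and hence of all their derivatives.

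First I would record the pointwise estimate coming from the reproducing kernel. For $z\in\mathbb{D}$ the evaluation functional satisfies $h(z)=\ip{h}{K_\xi(\cdot,z)}_\xi$, so by Cauchy--Schwarz $\abs{h(z)}\leq\|h\|_\xi\sqrt{K_\xi(z,z)}$. Since $K_\xi(z,z)=(1-\abs{z}^2)^{-(\xi+2)}$ is bounded on every compact subset of $\mathbb{D}$, convergence $h_n\to h$ in norm implies $h_n\to h$ uniformly on compacta. Next I would promote this to the derivatives. Differentiating the reproducing identity $h(z)=\int_{\mathbb{D}}h(w)K_\xi(z,w)\,d\nu_\xi(w)$ under the integral sign expresses $h'(z)=\ip{h}{\phi_z}_\xi$ for an explicit family $\phi_z(w)=(\xi+2)w(1-\overline{z}w)^{-(\xi+3)}\in\mathcal{A}_\xi^2$ whose norm is bounded on compacta; alternatively one invokes the Weierstrass theorem, under which locally uniform convergence of holomorphic functions yields locally uniform convergence of derivatives. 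Either way $h_n'\to h'$ uniformly on compacta.

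With both convergences in hand the conclusion is immediate. Since $f,g\in\mathcal{A}_\xi^2\subseteq H(\mathbb{D})$ are fixed, $Th_n=fh_n'+gh_n\to fh'+gh$ pointwise on $\mathbb{D}$. On the other hand $Th_n\to\psi$ in norm, hence pointwise as well, so by uniqueness of limits $fh'+gh=\psi$ as holomorphic functions. Because $\psi\in\mathcal{A}_\xi^2$, this exhibits $fh'+gh\in\mathcal{A}_\xi^2$, so that $h\in\mathcal{D}(T)$ and $Th=\psi$, proving that $T$ is closed.

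I expect the only genuine subtlety to be the passage to derivatives: one must justify that norm convergence controls $h_n'$ and not merely $h_n$. I would handle this through the reproducing kernel, via the bounded derivative-evaluation functionals $h\mapsto h'(z)$, rather than the purely analytic Weierstrass argument, since it keeps the estimate inside the Hilbert space framework already established and makes the uniform-on-compacta bound quantitative.
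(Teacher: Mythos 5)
Your proof is correct and follows essentially the same route as the paper: both arguments take a convergent sequence in the graph of $T$, upgrade norm convergence to locally uniform convergence of the functions and their derivatives, and then identify the pointwise limit $fh'+gh$ with the norm limit $\psi$. The only difference is that the paper cites Lemma 2.4 of Zhu's book for the locally uniform convergence of $h_n$ and $h_n'$, whereas you derive it directly from the reproducing kernel estimate $\abs{h(z)}\leq\|h\|_\xi\sqrt{K_\xi(z,z)}$ together with Weierstrass (or derivative-evaluation functionals), which makes the argument self-contained.
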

	The proof follows the argument for Theorem 1.1 in \cite{Villone1970}
        or Theorem 3.9 in \cite{Carlson2019},
        but we include it here for completeness.
	The lemma implies that the operator $L$ is closable.
		\begin{proof}
			Assume that $(x_n,y_n)$ is in the graph of $T$ for all $n$ and that the sequence converges to $(x,y)$ in $\mathcal{A}_\xi^2\times \mathcal{A}_\xi^2$.
			Then $x_n$ converges to $x$ in $\mathcal{A}_\xi^2$ and therefore, by Lemma 2.4 in \cite{Zhu2000}, $x_n(z)$ and $x_n'(z)$ converge
			uniformly on compact sets to $x(z)$ and $x'(z)$ respectively. This imples that
			$y_n(z) = Tx_n(z)  = f(z)x_n'(z) + g(z)x_n(z)$ converges uniformly on compact sets to $f(z)x'(z)+g(z)x(z) = Tx(z)$. 
  			Since $y_n$ converges to $y$ in $\mathcal{A}_\xi^2$ we get that $Tx=y$ which imples that the graph of $T$ is closed.
                      \end{proof}
                       Define the space
                      \begin{equation} \label{rapiddecrease}
                        H_\xi^\infty = \left\{ f(z)\in H(\mathbb{D}) \bigg|
                      	\sum_{k=0}^\infty |a_k|^2\| z^k\|_{\xi}^2 (\xi(\xi+2) +2k)^{2m}<\infty,m\in\mathbb{N}\right\}.
                    \end{equation}
                    This is the space of smooth vectors for the representation
                    $\pi_\xi$ \cite{Olafsson1988} and more generally
                    \cite{Chebli2004}. Notice that if $f\in H_\xi^\infty$, then
                      $f'$ is also in $H_\xi^\infty$, since
                      $$
                      \| f'\|_{\xi}^2
                      = \sum_{k=1}^\infty |ka_k|\| z^{k-1}\|_{\xi}^2
                      $$
                      and $\| z^{k-1}\|_{\xi}^2
                      = \frac{k + \xi +1}{k}\| z^k\|_{\xi}^2$ (choose $m=1$ in (\ref{rapiddecrease}) and compare norms).

                      \begin{lemma} \label{equal closure}
                        If $f$ and $g$ are polynomials, then
                        $S = f(z)\frac{d}{dz} + g(z)$ with domain
                        $\mathcal{D}(S)=H_\xi^\infty$ is densely defined.
                        Moreover, $S\subseteq T$ and is therefore closable.
                        Lastly, $\overline{L}=\overline{S}$.
                      \end{lemma}
                      Notice that the space of smooth vectors is
                      related (but not equal) to the spaces
                      introduced in section 3.1 of \cite{Carlson2019}.
                      
                      \begin{proof}
                        We only need to prove the last statement, which
                        requires that the graph $G(L)$ is dense in
                        the graph $G(\overline{S})=\overline{G(S)}$.
                        Since multiplication by $g(z)$
                        is a bounded operator on $\mathcal{A}_\xi^2$, it
                        is enough to work with $g=0$.
                        Let $f(z) = \sum_{k=0}^mb_kz^k$.
                        If $(h,\overline{S}h)$ is 
                        in $\overline{G(S)}$, then there
                        is a vector $\widetilde{h}\in H_\xi^\infty$ such that
                        $\| h-\widetilde{h}\|_{\xi}$ and $\| \overline{S}h-S\widetilde{h}\|_{\xi}$
                        are both small. Let 
                        $\widetilde{h}(z) = \sum_{k=0}^\infty a_kz^k$
                        with coefficient satisfying (\ref{rapiddecrease}).
                        Define the polynomial
                        $\widetilde{h}_n(z) = \sum_{k=0}^n a_kz^k$. 
                        Then
                        $
                        S\widetilde{h}-L\widetilde{h}_n
                        = S(\widetilde{h}-\widetilde{h}_n)$
                        has norm
                        \begin{equation} \label{normofSh}
                        \| S\widetilde{h}-L\widetilde{h}_n\|_{\xi}
                        \leq \sum_{\ell=0}^m |b_\ell|
                        \left( \sum_{k=n+1}^\infty |ka_k|^2 \|z^{k+\ell-1}\|_{\xi}^2 \right)^{1/2}
                      \end{equation}
                      Since
                        $\| z^{k-1}\|_{\xi}^2
                        = \frac{\xi +1 +k}{k}\| z^k\|_{\xi}^2 \leq
                        (\xi+2) \| z^k\|_{\xi}^2 $
                        and
                        $
                        \frac{\| z^{k+\ell}\|_{\xi}^2}{\| z^k\|_{\xi}^2} = \frac{(k+\ell)!\Gamma(k+\xi+2)}{k!\Gamma(k+\ell+\xi+2)}$
                        converges to 1 as $k\to\infty$, by the ratio test and (\ref{rapiddecrease}), the finite sum in (\ref{normofSh})
                        converges to $0$ as $n\to\infty$.
                        This shows that $\| h-\widetilde{h}_n\|_{\xi}$
                        and $\| \overline{S}h-L\widetilde{h}_n\|_{\xi}$ are both small when
$n$ is large enough, and therefore $G(L)$ is dense in $G(\overline{S})$.                     
                      \end{proof}

                      \begin{theorem} \label{order}
		The operator $L=f\frac{d}{dz}+g$ with domain $\mathcal{D}(L)=P(\mathbb{D})$ 
		is symmetric if and only if $f=a_0+a_1z+\overline{a_0}z^2$ and $g=b_0+(\xi+2)\overline{a_0}z$, where $a_1,b_0\in\mathbb{R}$.
	\end{theorem}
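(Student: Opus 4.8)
The plan is to test the symmetry condition directly against the monomial basis. Writing $f(z) = \sum_{k\ge 0} a_k z^k$ and $g(z) = \sum_{k \ge 0} b_k z^k$, and using that the condition $\ip{Lp}{q}_\xi = \ip{p}{Lq}_\xi$ is sesquilinear in $(p,q)$, the operator $L$ is symmetric on $P(\mathbb{D})$ if and only if $\ip{Lz^m}{z^n}_\xi = \ip{z^m}{Lz^n}_\xi$ for all $m,n \ge 0$. First I would expand $Lz^m = m\sum_k a_k z^{m+k-1} + \sum_k b_k z^{m+k}$ and read off, using orthogonality of the $z^j$ together with the known norms $\|z^j\|_\xi^2 = j!/(\xi+2)_j$, that
\[
\ip{Lz^m}{z^n}_\xi = \bigl(m\,a_{n-m+1} + b_{n-m}\bigr)\|z^n\|_\xi^2,
\]
with the convention $a_j = b_j = 0$ for $j<0$. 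The symmetry condition then becomes the scalar system
\[
\bigl(m\,a_{n-m+1} + b_{n-m}\bigr)\|z^n\|_\xi^2 = \overline{\bigl(n\,a_{m-n+1} + b_{m-n}\bigr)}\,\|z^m\|_\xi^2
\]
for all $m,n \ge 0$. Since interchanging $m$ and $n$ conjugates this equation (and the norms are real), it suffices to analyse the three cases $d := n-m = 0$, $d=1$, and $d \ge 2$.

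For $d=0$ the equation reads $m a_1 + b_0 = \overline{m a_1 + b_0}$ for every $m$, which forces $b_0 \in \mathbb{R}$ (take $m=0$) and then $a_1 \in \mathbb{R}$. For $d \ge 2$ every index on the right-hand side is negative, so that side vanishes and we get $m a_{d+1} + b_d = 0$ for all $m$; reading this as an identity in $m$ gives $a_{d+1} = b_d = 0$, that is, $a_k = 0$ for $k \ge 3$ and $b_k = 0$ for $k \ge 2$. The decisive case is $d=1$, where only the $a_0$ term survives on the right; after inserting the norm ratio $\|z^{m+1}\|_\xi^2/\|z^m\|_\xi^2 = (m+1)/(m+\xi+2)$ and cancelling the common factor $m+1$, the equation collapses to
\[
m a_2 + b_1 = (m+\xi+2)\,\overline{a_0}.
\]

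The step I expect to carry the weight is this last one: the identity holds for every integer $m \ge 0$, so comparing it as a polynomial identity in $m$ pins down \emph{both} unknowns simultaneously, yielding $a_2 = \overline{a_0}$ from the linear coefficient and $b_1 = (\xi+2)\overline{a_0}$ from the constant term. It is precisely the exact value $(m+1)/(m+\xi+2)$ of the norm ratio that produces the weight-dependent factor $\xi+2$ in $g$, so computing this ratio correctly is the crux of the argument. Collecting the constraints gives $f = a_0 + a_1 z + \overline{a_0} z^2$ with $a_1 \in \mathbb{R}$ and $g = b_0 + (\xi+2)\overline{a_0} z$ with $b_0 \in \mathbb{R}$, as claimed. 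Because each case was an equivalence, no separate verification of the converse is required: substituting these coefficients back satisfies the scalar system identically, so $L$ is symmetric exactly when $f$ and $g$ have the stated form.
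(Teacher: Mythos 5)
Your proposal is correct and takes essentially the same approach as the paper: test symmetry against the monomial basis, compute the matrix entries $\ip{Lz^m}{z^n}_\xi$ via orthogonality and the norms $\|z^k\|_\xi^2 = k!/(\xi+2)_k$, and compare coefficients. The only real difference is organizational: you analyze the scalar system diagonal by diagonal in $d=n-m$ with each case stated as an equivalence, so the converse comes for free, whereas the paper solves for $g$ and $f$ from $Le_0$ and $Le_1$ together with a few particular $(m,n)$ instances and then verifies the converse by a separate direct computation of $Le_n$.
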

			The first part of the proof is similar to \cite{Villone1970}, but is included here for completeness.
		\begin{proof}
			Since any polynomial is a finite linear combination of the $e_n$'s, it is enough to work with the basis elements.
			
			Assume $L$ is symmetric. For any $z\in\mathbb{D}$,
			\begin{align}
				Le_n(z)
				&=\sum_{k=0}^\infty na_k\sqrt{\frac{(\xi+2)_n}{n!}}z^{k+n-1}+\sum_{k=0}^\infty b_k\sqrt{\frac{(\xi+2)_n}{n!}}z^{k+n} \nonumber \\
				&=\sum_{k=0}^\infty na_{k-n+1}\sqrt{\frac{(\xi+2)_n}{n!}}z^{k}+\sum_{k=0}^\infty b_{k-n}\sqrt{\frac{(\xi+2)_n}{n!}}z^{k} \nonumber \\
				&=\sum_{k=0}^\infty na_{k-n+1}\sqrt{\frac{(\xi+2)_nk!}{(\xi+2)_kn!}}e_k(z)+\sum_{k=0}^\infty b_{k-n}\sqrt{\frac{(\xi+2)_nk!}{(\xi+2)_kn!}}e_k(z). \nonumber \\
				&=\sum_{k=0}^\infty\sqrt{\frac{(\xi+2)_nk!}{(\xi+2)_kn!}}(na_{k-n+1}+b_{k-n})e_k(z). \nonumber
			\end{align}
			Note that $e_k(z)$ is a multiple of $z^k$ and $\frac{k!}{(\xi+2)_k}\leq1$.
			By the series comparison test using the results of Lemma 2.1 in \cite{Villone1970}, the series $Le_n(z)$ converges, so
			\begin{align}
				Le_n
				&=\sum_{k=0}^\infty\sqrt{\frac{(\xi+2)_nk!}{(\xi+2)_kn!}}(na_{k-n+1}+b_{k-n})e_k. \nonumber
			\end{align}
			Now, by orthonormality of the $\{e_k\}_{k\in\mathbb{N}_0}$, we have that
			\begin{align}
				\ip{Le_n}{e_m}_{\xi}
				&=\sqrt{\frac{(\xi+2)_nm!}{(\xi+2)_mn!}}(na_{m-n+1}+b_{m-n}), \nonumber
			\end{align}
			and similarly for $\ip{e_n}{Le_m}_{\xi}$,
			\begin{align}
				\ip{e_n}{Le_m}_{\xi}
				&=\sqrt{\frac{(\xi+2)_mn!}{(\xi+2)_nm!}}(m\overline{a_{n-m+1}}+\overline{b_{n-m}}). \nonumber
			\end{align}
			Notice that $L$ is symmetric if and only if
			\begin{align}
				\sqrt{\frac{(\xi+2)_nm!}{(\xi+2)_mn!}}(na_{m-n+1}+b_{m-n})
				&=\sqrt{\frac{(\xi+2)_mn!}{(\xi+2)_nm!}}(m\overline{a_{n-m+1}}+\overline{b_{n-m}}) \label{formal sym}
			\end{align}
			for $n,m\in\mathbb{N}_0$.
			By properties of the Gamma function, 
			\begin{align}
				(\xi+2)_1
				&=\frac{\Gamma(\xi+2+1)}{\Gamma(\xi+2)}
					=\frac{(\xi+2)\Gamma(\xi+2)}{\Gamma(\xi+2)}
					=(\xi+2) \nonumber,
			\end{align}
			so we have in particular that
			\begin{align}
				g
				&=Le_0
					=\sum_{k=0}^\infty\sqrt{\frac{k!}{(\xi+2)_k}}b_{k}e_k
					=\sum_{k=0}^\infty\sqrt{\frac{(\xi+2)_k}{k!}}(k\overline{a_{1-k}}+\overline{b_{-k}})e_k \nonumber \\
				&=\overline{b_0}+\sqrt{(\xi+2)_1}\overline{a_0}e_1
					=\overline{b_0}+(\xi+2)_1\overline{a_0}z 
					=\overline{b_0}+(\xi+2)\overline{a_0}z. \nonumber
			\end{align}
			Using (\ref{formal sym}) for $m=0$ and $n=0$, we have $b_0=\overline{b_0}\in\mathbb{R}$.
			Then, we derive that
			\begin{align}
				f
				&=\frac{1}{\sqrt{\xi+2}}Le_1-gz \nonumber \\
				&=\left(\frac{1}{\sqrt{\xi+2}}\sum_{k=0}^\infty\sqrt{\frac{(\xi+2)k!}{(\xi+2)_k}}(a_k+b_{k-1})e_k\right)-(\overline{b_0}+(\xi+2)\overline{a_0}z)z \nonumber \\
				&=\left(\frac{1}{\xi+2}\sum_{k=0}^\infty\sqrt{\frac{(\xi+2)_k}{k!}}(k\overline{a_{2-k}}+\overline{b_{1-k}})e_k\right)-(\overline{b_0}z+(\xi+2)\overline{a_0}z^2) \nonumber \\
				&=\left[\frac{\overline{b_1}}{\xi+2}+(\overline{a_1}+\overline{b_0})z+\frac{(\xi+2)_2}{\xi+2}\overline{a_0}z^2\right]-(\overline{b_0}z+(\xi+2)\overline{a_0}z^2) \nonumber \\
				&=\frac{\overline{b_1}}{\xi+2}+\overline{a_1}z+\underbrace{\left(\frac{(\xi+2)_2}{\xi+2}-(\xi+2)\right)}_{=1}\overline{a_0}z^2
					=\frac{\overline{b_1}}{\xi+2}+\overline{a_1}z+\overline{\alpha_0}z^2. \nonumber 
			\end{align}
			Using (\ref{formal sym}) for $m=0$ and $n=1$, 
			\begin{align}
				\sqrt{(\xi+2)}a_0=\frac{\overline{b_1}}{\sqrt{\xi+2}}
				\iff a_0=\frac{\overline{b_1}}{\xi+2}, \nonumber
			\end{align}
			and for $m=1$ and $n=1$, we get that $a_1=\overline{a_1}\in\mathbb{R}$.	
				
			Now assume that $f,g$ are of the desribed form, then $L=(a_0+a_1z+\overline{a_0}z^2)\frac{d}{dz}+(b_0+(\xi+2)\overline{a_0}z)$ for $a_1,b_0\in\mathbb{R}$.
			Adopting the convention that $e_{-k}=0$ for $k\in\mathbb{N}$, we have
			\begin{align}
				&Le_n
					=(a_0+a_1z+\overline{a_0}z^2)\frac{d}{dz}+(b_0+(\xi+2)\overline{a_0}z)]\sqrt{\frac{(\xi+2)_n}{n!}}z^n \nonumber \\
				&=[na_0z^{n-1}+(na_1+b_0)z^n+(n+\xi+2)\overline{a_0}z^{n+1}]\sqrt{\frac{(\xi+2)_n}{n!}}\nonumber \\
				&=na_0\sqrt{\frac{(\xi+2)_n}{n(\xi+2)_{n-1}}}e_{n-1}+(na_1+b_0)e_n+(n+\xi+2)\overline{a_0}\sqrt{\frac{(n+1)(\xi+2)_n}{(\xi+2)_{n+1}}}e_{n+1} \nonumber \\
				&=a_0\sqrt{n(n+\xi+1)}e_{n-1}+(na_1+b_0)e_n+\overline{a_0}\sqrt{(n+\xi+2)(n+1)}e_{n+1}. \nonumber
			\end{align}
			It follows by orthogonality that
			\begin{align}
				\ip{Le_n}{e_{n-1}}_\xi
				&=a_0\sqrt{n(n+\xi+1)} 
					=\ip{e_n}{Le_{n-1}}_\xi, \nonumber \\
				\ip{Le_n}{e_n}_\xi
				&=na_1+b_0
					=\ip{e_n}{Le_n}_\xi \nonumber \\
				\ip{Le_n}{e_{n+1}}_\xi
				&=\overline{a_0}\sqrt{(n+\xi+2)(n+1)}
					=\ip{e_n}{Le_{n+1}}_\xi, \nonumber
			\end{align}
			and that $\ip{Le_n}{e_m}_\xi=0=\ip{e_n}{Le_m}_\xi$ when $m$ is not $n-1$, $n$ or $n+1$, so $L$ is symmetric.
                      \end{proof}

                      One of our main results in this section is the following
                      classification of self-adjoint first-order
                      differential operators in terms of the discrete series
                      representations.
                \begin{theorem} \label{self adjoint operator}
		Given $a,b\in\mathbb{R}$ and $c\in\mathbb{C}$, define $L=(cz^2+az+\overline{c})\dfrac{d}{dz}+((\xi+2)cz+b)$ with $\mathcal{D}(L)=P(\mathbb{D})$.
		Then $L$ is symmetric and equal to the restriction of an operator of the form $S=i\pi_\xi(X) + d$ with $D(S)=H_\xi^\infty$ to 
		for $d$ real and $X\in\mathfrak{su}(1,1)$.
		Moreover, $\overline{L} = i\Pi_\xi(X)+d$ is self-adjoint.
	\end{theorem}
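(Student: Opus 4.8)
The plan is to read off the symmetry from Theorem~\ref{order}, then match coefficients against the explicit operators in \eqref{skew adjoint operators} to exhibit $L$ as a restriction of $i\pi_\xi(X)+d$, and finally pass to closures using Lemma~\ref{equal closure}. First I would record that $L$ has exactly the form required by Theorem~\ref{order}: setting $a_0=\overline{c}$, $a_1=a$ and $b_0=b$ (all consistent with $a,b\in\mathbb{R}$), the coefficient $f=cz^2+az+\overline{c}$ becomes $a_0+a_1z+\overline{a_0}z^2$ and $g=(\xi+2)cz+b$ becomes $b_0+(\xi+2)\overline{a_0}z$. Hence $L$ is symmetric by Theorem~\ref{order}, which disposes of the first assertion.

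Next I would produce $X$ and $d$ by a direct comparison with \eqref{skew adjoint operators}. Writing $X=\sigma\frak{X}+\tau\frak{Y}+\lambda\frak{Z}$ and multiplying \eqref{skew adjoint operators} by $i$, the operator $i\pi_\xi(X)+d$ has leading coefficient $i(\tau+i\lambda)=-\lambda+i\tau$ on $z^2\frac{d}{dz}$, coefficient $2(\sigma+\lambda)$ on $z\frac{d}{dz}$, and zeroth-order multiplication part $(\xi+2)(-\lambda+i\tau)z+(\xi+2)(\sigma+\lambda)+d$. Matching these against $L$ gives the linear system $c=-\lambda+i\tau$, $a=2(\sigma+\lambda)$, and $b=(\xi+2)(\sigma+\lambda)+d$, whose unique real solution is $\lambda=-\operatorname{Re}c$, $\tau=\operatorname{Im}c$, $\sigma=\tfrac{a}{2}+\operatorname{Re}c$, and $d=b-\tfrac{\xi+2}{2}a$. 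The remaining coefficients of the constant term and the $z$-term are automatically consistent, reflecting the constraint $a_0=\overline{a_0}$ built into the symmetric form. Thus $S=i\pi_\xi(X)+d$ with domain $H_\xi^\infty$ restricts to $L$ on $P(\mathbb{D})$; note that $\pi_\xi(X)$ is skew-symmetric, so $i\pi_\xi(X)$ is symmetric, consistent with the first paragraph.

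For the closure statement I would invoke Lemma~\ref{equal closure}: since $f$ and $g$ are polynomials, $\overline{L}=\overline{S}$. It then remains to identify $\overline{S}$. Because multiplication by the scalar $i$ and addition of the bounded, everywhere-defined operator $d\cdot I$ commute with taking operator closures, $\overline{S}=\overline{i\pi_\xi(X)+d}=i\,\overline{\pi_\xi(X)}+d=i\Pi_\xi(X)+d$, using that $\Pi_\xi(X)$ is by definition the closure of $\pi_\xi(X)$. Finally, since $\pi_\xi$ is unitary, $\Pi_\xi(X)$ is skew-adjoint, hence $i\Pi_\xi(X)$ is self-adjoint and remains so after adding the real scalar $d$; therefore $\overline{L}=i\Pi_\xi(X)+d$ is self-adjoint.

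The routine part is the coefficient bookkeeping in the second paragraph, which only requires that the associated linear system be solvable over $\mathbb{R}$ — and it always is. The step I expect to need the most care is the closure identification: one must be sure that the inclusions $P(\mathbb{D})\subseteq H_\xi^\infty\subseteq\mathcal{D}(T)$ together with Lemma~\ref{equal closure} genuinely force $\overline{L}=\overline{S}$ rather than merely $\overline{L}\subseteq\overline{S}$, and that adjoining the bounded perturbation $d\cdot I$ does not enlarge the domain. Both are handled by the cited lemma and by the general fact that a closable operator perturbed by a bounded everywhere-defined operator has closure equal to the sum of the closure and the perturbation, but they are where the argument could silently go wrong.
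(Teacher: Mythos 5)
Your proof is correct and takes essentially the same approach as the paper: symmetry from Theorem~\ref{order}, coefficient matching against \eqref{skew adjoint operators} to realize $L$ as a restriction of $i\pi_\xi(X)+d$, and Lemma~\ref{equal closure} together with skew-adjointness of $\Pi_\xi(X)$ for the closure and self-adjointness claims. Your explicit solution ($\lambda=-\operatorname{Re}c$, $\tau=\operatorname{Im}c$, $\sigma=\tfrac{a}{2}+\operatorname{Re}c$, $d=b-\tfrac{\xi+2}{2}a$) is in fact more careful than the paper's own bookkeeping, which misstates the constant term of $i\pi_\xi(X)$, and where the paper cites \cite{Segal1951} for self-adjointness of $i\Pi_\xi(X)+d$ you instead derive it from the skew-adjointness of $\Pi_\xi(X)$ asserted in Section~3 plus the standard facts that closures commute with nonzero scalar multiplication and bounded perturbations --- an equally valid justification.
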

		\begin{proof}
			Theorem \ref{order} gives that $L$ is symmetric.
			Let $\sigma,\tau,\lambda\in\mathbb{R}$.
			For any $X\in\mathfrak{su}(1,1)$, we can express $i\pi_\xi(X)$ as
			\begin{align}
			 	i(\sigma\pi_\xi(\frak{X})+\tau\pi_\xi(\frak{Y})+\lambda\pi_\xi(\frak{Z}))
			 	&=[(-\lambda+i\tau)z^2+(2\sigma+2\lambda)z+(-\lambda-i\tau)]\dfrac{d}{dz} \nonumber \\
			 	&~~~~~~~~+[(\xi+2)(-\lambda+i\tau)z+(\xi+2)(\sigma+\lambda)]. \nonumber
			\end{align}
			which can be simplified to
			\begin{align}
				[(-\lambda+i\tau)z^2+(2\sigma+2\lambda)z+(-\lambda-i\tau)]\dfrac{d}{dz}+[(\xi+2)(-\lambda+i\tau)z+2(\sigma+\lambda)]. \nonumber
			\end{align}
			This is equivalent to saying 
			\begin{align}
				i\pi_\xi(X)
					=(cz^2+az+\overline{c})\dfrac{d}{dz}+((\xi+2)cz+a) \nonumber
			\end{align}
			for $a\in\mathbb{R}$ and $c\in\mathbb{C}$.
			Notice that
			\begin{align}
				L
				&=\overbrace{(cz^2+az+\overline{c})\dfrac{d}{dz}+((\xi+2)cz+b)}^{\text{F-O DO on $\mathcal{A}_\xi^2$}} \nonumber \\
				&=\underbrace{(cz^2+az+\overline{c})\dfrac{d}{dz}+((\xi+2)cz+b)+(a-b)}_{\text{F-O DO from $(\pi_\xi,\mathcal{A}_\xi^2)$}}
					=i\pi_\xi(X)+d. \nonumber
			\end{align}
			We know that $\overline{L}=\overline{S}$ from Lemma \ref{equal closure}.
                        That $\overline{S}=i\Pi(X)+d$
                        is self-adjoint has been shown in 
                        \cite{Segal1951}.
		\end{proof}
		
		By a similar argument to Villone \cite[Theorem 1.2]{Villone1970}, this implies that $\overline{L}=T$.

                \begin{remark}
                  This results shows that the polynomials are a core for
                  $\Pi_\xi(X)$ for every $X\in\mathfrak{su}(1,1)$ for
                  the discrete series representation. This seems to be
                  a new result, although it is likely not surprising
                  to experts.
                  
                  It seems that one should be able to characterize
                  self-adjoint differential operators of any order,
                  studied in \cite{Villone1970},
                  via the
                  extension of $\pi_\xi$ to the universal enveloping algebra of
                  $\mathfrak{su}(1,1)$.
                \end{remark}

                Theorem~\ref{self adjoint operator} shows that any self-adjoint first-order differential operator on $\mathcal{A}_\xi^2$ is generated by $\Pi_\xi$ and the identity. 
		This allows us to strengthen our answer to Zhu's question in \cite{Zhu2015}.
		\begin{theorem} \label{Zhu3}
			There are no first-order self-adjoint differential operators $A,B$ on $\mathcal{A}_\xi^2$ such that $\overline{[A,B]}=\eta$ for $\eta\in\mathbb{C}$\textbackslash$\{0\}$.  
		\end{theorem}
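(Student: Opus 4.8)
The plan is to reduce this operator-theoretic statement to the Lie-algebraic fact already established in Theorem~\ref{Zhu1}. By Theorem~\ref{self adjoint operator}, every self-adjoint first-order differential operator on $\mathcal{A}_\xi^2$ has the form $i\Pi_\xi(X)+c$ for some $X\in\mathfrak{su}(1,1)$ and some real $c$. So I would begin by writing $A=i\Pi_\xi(X)+c$ and $B=i\Pi_\xi(Y)+d$ with $X,Y\in\mathfrak{su}(1,1)$ and $c,d\in\mathbb{R}$.

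Next I would compute the commutator. Since $c$ and $d$ are real scalars, hence bounded multiples of the identity, they commute with everything and do not alter any domains; thus on the common domain
\[
[A,B]=[i\Pi_\xi(X),\,i\Pi_\xi(Y)]=-[\Pi_\xi(X),\Pi_\xi(Y)].
\]
As recalled in the discussion preceding Theorem~\ref{Lie UP}, the operator $[\Pi_\xi(X),\Pi_\xi(Y)]$ is closable with closure $\Pi_\xi([X,Y])$. Taking closures therefore yields
\[
\overline{[A,B]}=-\Pi_\xi([X,Y]).
\]

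Finally, I would argue by contradiction. If $\overline{[A,B]}=\eta$ for some $\eta\in\mathbb{C}\setminus\{0\}$, then $\Pi_\xi([X,Y])=-\eta$ would be a nonzero multiple of the identity. Since $[X,Y]\in\mathfrak{su}(1,1)$, this is precisely the situation excluded by Theorem~\ref{Zhu1}, whose proof shows that $\Pi_\xi(W)$ is never a nonzero multiple of the identity for any $W\in\mathfrak{su}(1,1)$ (using $[\mathfrak{su}(1,1),\mathfrak{su}(1,1)]=\mathfrak{su}(1,1)$). This contradiction finishes the proof.

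The computational content here is minimal; the statement is essentially a translation of the operator question into the Lie algebra via Theorem~\ref{self adjoint operator}. The one point needing genuine care---and the step I expect to be the main obstacle---is the domain bookkeeping behind $\overline{[A,B]}=-\Pi_\xi([X,Y])$: I must verify that shifting $\Pi_\xi(X)$ and $\Pi_\xi(Y)$ by the bounded scalars $c,d$ leaves the commutator, together with its domain, unchanged as an unbounded operator, so that the closure identity for $[\Pi_\xi(X),\Pi_\xi(Y)]$ transfers legitimately to $[A,B]$.
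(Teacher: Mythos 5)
Your proposal is correct and follows essentially the same route as the paper: reduce via Theorem~\ref{self adjoint operator} to elements of $\mathfrak{su}(1,1)$, use the closure identity $\overline{[\Pi_\xi(X),\Pi_\xi(Y)]}=\Pi_\xi([X,Y])$, and invoke Theorem~\ref{Zhu1}. If anything, your bookkeeping is more careful than the paper's, which writes $A=\Pi_\xi(X)$ rather than $A=i\Pi_\xi(X)+c$ and thus suppresses exactly the factor of $i$ and the scalar shifts that you handle explicitly (the sign in $-\Pi_\xi([X,Y])=\Pi_\xi([Y,X])$ being harmless since $[Y,X]\in\mathfrak{su}(1,1)$).
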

			\begin{proof}
				Let $A,B$ be first-order self-adjoint 
differential operators on $\mathcal{A}_\xi^2$. We know that $A=\Pi_\xi(X)$ and
$B=\Pi_\xi(Y)$ for some $X,Y\in\mathfrak{su}(1,1)$.
Then
\begin{align}
  [A,B] = 
  [\Pi_\xi(X),\Pi_\xi(Y)]
                                  \subseteq \Pi_\xi([X,Y]). \nonumber
				\end{align}
				Apply the fact that $\overline{[A,B]}=\Pi_\xi([X,Y])$ which can never be a non-zero 
multiple of the identity by Theorem \ref{Zhu1}.
\end{proof}

\section{Relation between Bergman spaces with shifted weights}
In this section, we show that the weighted Bergman space, $\mathcal{A}_\xi^2$, can be related to a shifted weighted Bergman space.

	\begin{theorem} \label{isomorphism}
		The operator $\widetilde{L}:=z\frac{d}{dz}+c$ with $c$ not being 0 or a negative integer extends to an isomorphism between $\mathcal{A}_\xi^2$ and $\mathcal{A}_{\xi+2}^2$.
	\end{theorem}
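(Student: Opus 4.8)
The plan is to diagonalize $\widetilde{L}$ on the monomial basis and reduce the claim to an elementary comparison of weighted $\ell^2$ norms. First I would compute the action on monomials: since $\widetilde{L}(z^n) = nz^n + cz^n = (n+c)z^n$, the operator is diagonal, scaling $z^n$ by the factor $n+c$. Because $c$ is neither $0$ nor a negative integer, every eigenvalue $n+c$ (for $n\in\mathbb{N}_0$) is nonzero, which is exactly what will make the operator injective with dense range.

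Next I would translate the statement into one about the coefficient sequences. Writing $f=\sum_{n=0}^\infty a_nz^n$ and recalling that $\|z^n\|_\xi^2 = n!/(\xi+2)_n$, we have $\|f\|_\xi^2 = \sum_n \abs{a_n}^2 n!/(\xi+2)_n$ and, after applying $\widetilde{L}$ and reading the norm in the \emph{shifted} space, $\|\widetilde{L}f\|_{\xi+2}^2 = \sum_n \abs{n+c}^2\abs{a_n}^2 n!/(\xi+4)_n$. The ratio of the $n$-th squared coefficients is
\[
r_n := \abs{n+c}^2\frac{(\xi+2)_n}{(\xi+4)_n} = \abs{n+c}^2\frac{(\xi+2)(\xi+3)}{(n+\xi+2)(n+\xi+3)},
\]
where the second equality follows from the Pochhammer simplification $(\xi+2)_n/(\xi+4)_n = (\xi+2)(\xi+3)/\bigl((n+\xi+2)(n+\xi+3)\bigr)$, itself immediate from $\Gamma(\xi+4)/\Gamma(\xi+2)=(\xi+2)(\xi+3)$ and $\Gamma(\xi+4+n)/\Gamma(\xi+2+n)=(n+\xi+2)(n+\xi+3)$.

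The key step is then to show that $(r_n)_n$ is bounded above and below by positive constants. Each $r_n$ is strictly positive since $n+c\neq 0$, and as $n\to\infty$ one has $\abs{n+c}^2\sim n^2$ and $(n+\xi+2)(n+\xi+3)\sim n^2$, so $r_n \to (\xi+2)(\xi+3)>0$. A sequence of positive numbers with a positive limit is automatically bounded away from both $0$ and $\infty$, so there exist constants $0<m\le M<\infty$ with $m\le r_n\le M$ for all $n$. This yields the two-sided estimate $m\|f\|_\xi^2 \le \|\widetilde{L}f\|_{\xi+2}^2 \le M\|f\|_\xi^2$ on polynomials, showing that $\widetilde{L}$ is bounded and bounded below, hence extends to a topological embedding of $\mathcal{A}_\xi^2$ into $\mathcal{A}_{\xi+2}^2$. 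Finally, since $\widetilde{L}$ sends each monomial $z^n$ to a nonzero multiple of itself and the monomials are dense in $\mathcal{A}_{\xi+2}^2$, the range is dense; combined with the lower bound (which forces closed range) this gives surjectivity, so the extension is an isomorphism.

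The main obstacle I anticipate is the lower bound: boundedness below is what genuinely requires the hypothesis that $c$ is neither $0$ nor a negative integer, since a vanishing eigenvalue $n+c=0$ would force $r_n=0$ and destroy both injectivity and the norm equivalence. Everything else is routine once the Pochhammer ratio is computed correctly; the upper bound and the density of the range follow from the very same computation.
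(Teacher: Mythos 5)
Your proof is correct and follows essentially the same route as the paper's: both diagonalize $\widetilde{L}$ on the monomials and derive two-sided norm bounds from the fact that the positive ratio $r_n = |n+c|^2\,(\xi+2)(\xi+3)/\bigl((n+\xi+2)(n+\xi+3)\bigr)$ converges to a positive limit, which is exactly where the hypothesis on $c$ enters. The only cosmetic difference is that the paper exhibits the preimage $\sum_k \frac{a_k}{k+c}z^k$ explicitly to prove surjectivity, whereas you package the identical estimate as a bounded-below-plus-dense-range argument; the two are interchangeable.
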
 
		\begin{proof}
			The argument for linearity is clear.
			Let $f,g\in\mathcal{A}_\xi^2$.
			Denote $f(z):=\sum_{k=0}^\infty a_kz^k$, and let $g(z):=\sum_{k=0}^\infty b_kz^k$.
			Then
			\begin{align}
				\widetilde{L}f(z)=\widetilde{L}g(z)
				&\iff\sum_{k=0}^\infty(k+c)a_kz^k=\sum_{k=0}^\infty(k+c)b_kz^k 
					\iff a_k=b_k \nonumber
			\end{align}
			for every $k\in\mathbb{N}_0$, so $\widetilde{L}$ is injective.
			
			Let $f\in\mathcal{A}_{\xi+2}^2$ and denote $f(z):=\sum_{k=0}^\infty a_kz^k$, and let $g(z):=\sum_{k=0}^\infty\frac{a_k}{k+c}z^k$.
			Notice that $\frac{(k+c)^2}{(k+\xi+3)(k+\xi+2)}\to1$ as $k\to\infty$, and is always positive if $c\not\in \{\dots,-3,-2,-1,0 \}$, so there is $D>0$, such that $\frac{1}{D}\leq\frac{(k+c)^2}{(k+\xi+3)(k+\xi+2)}\leq D$ for any $k\in\mathbb{N}_0$.
			Then,
			\begin{align}
				\|g\|_\xi^2
				&=\sum_{k=0}^\infty\frac{1}{(k+c)^2}|a_k|^2\|z^k\|_{\xi}^2 \nonumber \\
				&\leq\frac{D}{(\xi+3)(\xi+2)}\sum_{k=0}^\infty\frac{(\xi+3)(\xi+2)}{(k+\xi+3)(k+\xi+2)}|a_k|^2\|z^k\|_\xi^2 \nonumber \\
				&=\frac{D}{(\xi+3)(\xi+2)}\sum_{k=0}^\infty|a_k|^2\|z^k\|_{\xi+2}^2
					=\frac{D}{(\xi+3)(\xi+2)}\|f\|_{\xi+2}^2
					<\infty, \nonumber
			\end{align}
			so $g\in\mathcal{A}_\xi^2$.
			Since $\widetilde{L}g(z)=f(z)$ for every $z\in\mathbb{D}$, then $\widetilde{L}$ is surjective.
			For any $f\in\mathcal{A}_\xi^2$ denoted $f(z):=\sum_{k=0}^\infty a_kz^k$, we have
			\begin{align}
				&\frac{(\xi+3)(\xi+2)}{D}\underbrace{\sum_{k=0}^\infty|a_k|^2\|z^k\|_\xi^2}_{=\|f\|_\xi^2}
					=(\xi+3)(\xi+2)\sum_{k=0}^\infty\frac{1}{D}|a_k|^2\|z^k\|_\xi^2 \nonumber \\
				&\leq\underbrace{(\xi+3)(\xi+2)\sum_{k=0}^\infty\frac{(k+c)^2}{(k+\xi+3)(k+\xi+2)}|a_k|^2\|z^k\|_\xi^2}_{=\|\widetilde{L}f\|_{\xi+2}^2} \nonumber \\
				&\leq (\xi+3)(\xi+2)\sum_{k=0}^\infty D|a_k|^2\|z^k\|_\xi^2
					=D(\xi+3)(\xi+2)\underbrace{\sum_{k=0}^\infty|a_k|^2\|z^k\|_\xi^2}_{=\|f\|_\xi^2}, \nonumber
			\end{align}
			so $\widetilde{L}$ is continuous.
		\end{proof}
			
       \begin{remark} \label{xi+2}
                  
		If $\widetilde{L}=\frac{2}{\xi+2}z\frac{d}{dz}+1$ and $K_\xi^w(z)=K_\xi(z,w)$ is the reproducing kernel
    		for $\mathcal{A}^2_\xi$, then
          $\widetilde{L}K_\xi^w(z)
          = K_{\xi+1}^w(z) = K_{\xi+1}(z,w)$. Such differential operators
          have been considered in \cite{Yan1992}.
          Also, Theorem \ref{isomorphism} shows that $\mathcal{D}(\Pi_\xi(\frak{X}))=\mathcal{A}_{\xi-2}^2=\mathcal{A}_{\xi,1}^2$ as sets when $\xi>1$.
	\end{remark}

	Zhu in \cite[p.~50]{Zhu2000} shows that $\mathcal{R}:=z\frac{d}{dz}$ is a surjective map from $\mathcal{A}_\xi$ to $\mathcal{A}_{\xi+2}^2$.
	Theorem \ref{isomorphism} shows that $\mathcal{R}$ (surjective) plus particular $c's$ (injective) is an isomorphism between the spaces.
	This is not necessarily true for any $c\in\mathbb{R}$.
	We now state a generalization we believe is possible.
	\begin{remark} \label{conjecture}
          We conjecture that Theorem \ref{isomorphism} can be extended to show that $\widetilde{L}$ extends to an isomorphism between $\mathcal{A}_\xi^p$ and $\mathcal{A}_{\xi+p}^p$. This would involve a study of multipliers
          between the two spaces.
          This also opens up the question of which
          differential operators $q_1\frac{d}{dz}+q_2$
          extend to isomorphisms between Bergman spaces if $q_1,q_2$ are polynomials.

	\end{remark}

\bibliographystyle{plain}

\end{document}